\newcommand{\copyrightnote}[2]{{\renewcommand{\thefootnote}{}
 \footnotetext{\small\it
\begin{flushleft}
 \copyright \ #1   #2  
\end{flushleft}}}}
\newcommand{\Name}[1]{\begin{flushleft}
                       \LARGE \bf #1
                       \end{flushleft}\vspace{-3mm}}
\newcommand{\Author}[1]{\begin{flushleft}
                       \it #1 \end{flushleft}}
\newcommand{\Address}[1]{\begin{flushleft}
                       \it #1 \end{flushleft}}
\newcommand{\Date}[1]{\begin{flushleft}
                      \small  \it #1 \end{flushleft}}
\newcommand{\evenhead}{Author \ name}
\newcommand{\oddhead}{Article \ name}
\renewcommand{\@evenhead}{
\hspace*{-3pt}\raisebox{-15pt}[\headheight][0pt]{\vbox{\hbox to \textwidth
{\thepage \hfil \evenhead}\vskip4pt \hrule}}}
\renewcommand{\@oddhead}{
\hspace*{-3pt}\raisebox{-15pt}[\headheight][0pt]{\vbox{\hbox to \textwidth
{\oddhead \hfil \thepage}\vskip4pt\hrule}}}
\renewcommand{\@evenfoot}{}
\renewcommand{\@oddfoot}{}
\long\def\@makecaption#1#2{%
  \vskip\abovecaptionskip
  \sbox\@tempboxa{\small \textbf{#1.}\ \ #2}%
  \ifdim \wd\@tempboxa >\hsize
    {\small \textbf{#1.}\ \ #2}\par
  \else
    \global \@minipagefalse
    \hb@xt@\hsize{\hfil\box\@tempboxa\hfil}%
  \fi
  \vskip\belowcaptionskip}
\newcommand{\JNMPnumberwithin}[3][\arabic]{%
  \@ifundefined{c@#2}{\@nocounterr{#2}}{%
    \@ifundefined{c@#3}{\@nocnterr{#3}}{%
      \@addtoreset{#2}{#3}%
      \@xp\xdef\csname the#2\endcsname{%
        \@xp\@nx\csname the#3\endcsname .\@nx#1{#2}}}}%
}
\renewenvironment{proof}[1][\proofname]{\par
  \normalfont
  \topsep6\p@\@plus6\p@ \trivlist
  \item[\hskip\labelsep\textbf{%
    #1\@addpunct{.}}]\ignorespaces
}{%
  \qed\endtrivlist
}
\newcommand{\resetfootnoterule} {
  \renewcommand\footnoterule{%
  \kern-3\p@
  \hrule\@width.4\columnwidth
  \kern2.6\p@}
}
\renewcommand{\footnoterule}{}
 \definecolor{Refkey}{RGB}{255,127,0}
 \definecolor{Labelkey}{RGB}{127,0,255}
  \def\SK@refcolor{\color{Refkey}}
  \def\SK@labelcolor{\color{Labelkey}}
  \definecolor{mdg}{RGB}{0,177,0} 
  \definecolor{mdb}{RGB}{0,0,191}
  \definecolor{mddb}{RGB}{0,0,91}
  \definecolor{mdy}{RGB}{255,69,0} 
  \definecolor{gray}{RGB}{99,99,99} 
  \definecolor{darkgreen}{RGB}{0,128,0} 
  \definecolor{darkblue}{RGB}{0,0,128}
\DeclareMathOperator{\diag}{diag}
\newtheorem{proposition}{Proposition}
\theoremstyle{definition}
\newtheorem{definition}{Definition}
\theoremstyle{remark}
\newtheorem{remark}{Remark}
\begin{document}

\renewcommand{\evenhead}{ {\LARGE\textcolor{blue!10!black!40!green}{{\sf \ \ \ ]ocnmp[}}}\strut\hfill 
I G Korepanov
}
\renewcommand{\oddhead}{ {\LARGE\textcolor{blue!10!black!40!green}{{\sf ]ocnmp[}}}\ \ \ \ \  
Self-similarity on 4d cubic lattice
}

\thispagestyle{empty}
\newcommand{\FistPageHead}[3]{
\begin{flushleft}
\raisebox{8mm}[0pt][0pt]
{\footnotesize \sf
\parbox{150mm}{{\textcolor{blue!10!black!40!green}{{\bf Open Communications in Nonlinear Mathematical Physics}}}
\ \ {Special Issue: Hietarinta}, 2025/2026\\[0.1cm]
\strut\hfill pp
#2\hfill {\sc #3}}}\vspace{-13mm}
\end{flushleft}}

\FistPageHead{1}{\pageref{firstpage}--\pageref{lastpage}}{ \ \ }

\strut\hfill

\strut\hfill

\copyrightnote{The author. Distributed under a Creative Commons Attribution 4.0 International License}

\begin{center}

{\bf {\large A Special OCNMP Issue in Honour of Jarmo Hietarinta\\ 

\smallskip

on the occasion of his 80th Birthday}}
\end{center}

\smallskip

\Name{Self-similarity on 4d cubic lattice}

\Author{Igor G. Korepanov}

\Address{Dept. of Computer Sciences and Applied Mathematics, 
Moscow Aviation Institute, 
4~Volokolamskoe shosse, 
125993 Moscow, 
Russian Federation}

\Date{Received April 4, 2025; Accepted May 26, 2025}

\setcounter{equation}{0}

\begin{abstract}

\noindent 
A phenomenon of ``algebraic self-similarity'' on 3d cubic lattice, providing what can be called an algebraic analogue of Kadanoff--Wilson theory, is shown to possess a 4d version as well. Namely, if there is a $4\times 4$ matrix~$A$ whose entries are indeterminates over the field~$\mathbb F_2$, then the $2\times 2\times 2\times 2$ block made of sixteen copies of~$A$ reveals the existence of four direct ``block spin'' summands corresponding to the same matrix~$A$. Moreover, these summands can be written out in quite an elegant way. Somewhat strikingly, if the entries of~$A$ are just zeros and ones---elements of~$\mathbb F_2$---then there are examples where two more ``block spins'' split out, and this time with different~$A$'s.
\end{abstract}

\label{firstpage}


\sloppy

\section{Introduction}\label{s:i}

This paper is significantly inspired by Hietarinta's work~\cite{Hietarinta}.
We develop some algebra, dealing with four-dimen\-sional cubic lattices having a ``spin'' at each of its edges, and a copy of a $4\times 4$ matrix
\begin{equation}\label{A}
A = \begin{pmatrix} a_{11} & a_{12} & a_{13} & a_{14} \\ a_{21} & a_{22} & a_{23} & a_{24} \\ a_{31} & a_{32} & a_{33} & a_{34} \\ a_{41} & a_{42} & a_{43} & a_{44} \end{pmatrix}
\end{equation}
in each of its vertices that governs the permitted configurations of the adjacent spins. Namely, we choose positive directions for all four coordinate axes and assume that all edges are oriented according to these directions; this implies that each vertex has four incoming, or \emph{input}, edges, and for outgoing, or \emph{output}, ones. Denoting the input spins ( = spins on incoming edges) as~$x_i$, \ $i=1,\ldots,4$, and output spins similarly as~$y_i$, we require that
\begin{equation}\label{v}
\begin{pmatrix} x_1 & x_2 & x_3 & x_4 \end{pmatrix} A = \begin{pmatrix} y_1 & y_2 & y_3 & y_4 \end{pmatrix}
\end{equation}
at each vertex, see Figure~\ref{f:A}.
\begin{figure}
 \centering
 \includegraphics[scale=1]{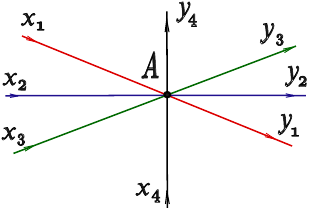}
 \caption{One vertex of a 4d lattice and its adjacent spins. We fix, with this picture, the way we depict the four coordinate axes---their directions on the 2d plane, and their colors ($x_j$ and~$y_j$ are of course on a line parallel to the $j$-th axis)}
 \label{f:A}
\end{figure}

Both our spins and matrix entries in~\eqref{A} are supposed to belong to a \emph{field of a finite characteristic}. To be exact, in this paper, it will be either the smallest Galois field~$\mathbb F_2$, or its transcendental extension by (as many as needed) independent indeterminates.

\begin{remark}
It can be seen already from~\eqref{v} that we prefer to use \emph{row} vectors that are, accordingly, multiplied by matrices from the \emph{right}. One of the reasons is that the GAP computer algebra system~\cite{GAP}, which we used extensively, has the same clear preference.
\end{remark}

This construction may be considered as a simplified model of 4d lattice statistical physics. Earlier, similar models in three dimensions were considered in~\cite{block-spin} and~\cite{prodolzhenie}. In~\cite{block-spin}, a remarkable property was discovered that we call \emph{algebraic self-similarity}: a cubic block of matrices~$A$ decomposes into a \emph{direct sum} of new vertices, each bearing a matrix related in a simple way to the initial~$A$. In~\cite{prodolzhenie}, it was shown that this makes it possible to calculate some quite nontrivial spin correlation functions at least for a ``combinatorial'' model, where all permitted configurations have equal probabilities.

In theoretical physics, self-similarity on lattices is studied usually within the framework of \emph{Kadanoff--Wilson theory}~\cite{Kadanoff,Wilson}, and is applied to the theory of phase transitions. The reader can find more on that subject, e.g., in~\cite[Chapter~4]{Gitterman-Halpern}. Our theory can thus be called an \emph{algebraic analogue} of Kadanoff--Wilson theory.

Concerning our motivation to study the \emph{four}-dimen\-sional case, we think it is interesting already because it reveals, as we will show, new and interesting algebraic structures. It must also be kept in mind that a reduction to the physically more realistic 3d case may be found.

\begin{remark}
Some way of making ``four dimensions from three dimensions'' has been already proposed in~\cite[Section~VI]{block-spin}. It looks, however, too restrictive, because of the very special boundary conditions that we had to impose there. It is certainly desirable to have more freedom in boundary conditions if we want, for instance, to define and calculate interesting correlation functions, like we did in~\cite{prodolzhenie}. Moreover, block spins in that approach correspond to (long) strings made of $3\times 3$ matrices; we think that our current approach where a block spin corresponds to a single $4\times 4$ matrix is more elegant, and more promising both from the viewpoint of studying its algebraic structure, and from the viewpoint of possible applications.
\end{remark}

\section[General case, with indeterminates over~\texorpdfstring{$\mathbb F_2$}{F2}]{General case, with indeterminates over~$\bm{\mathbb F_2}$}\label{s:g}

In this section, we take $a_{ij}$ in~\eqref{A} to be \emph{indeterminates} over the Galois field~$\mathbb F_2$. Accordingly, spins belong now to the field~$F$ of rational functions of all~$a_{ij}$, and linear spaces are considered over~$F$.

We make then a $2\times 2\times 2\times 2$ block~$\mathbf B$ of copies of matrix~$A$, as shown in Figure~\ref{f:B}.
\begin{figure}
 \centering
 \includegraphics[scale=1]{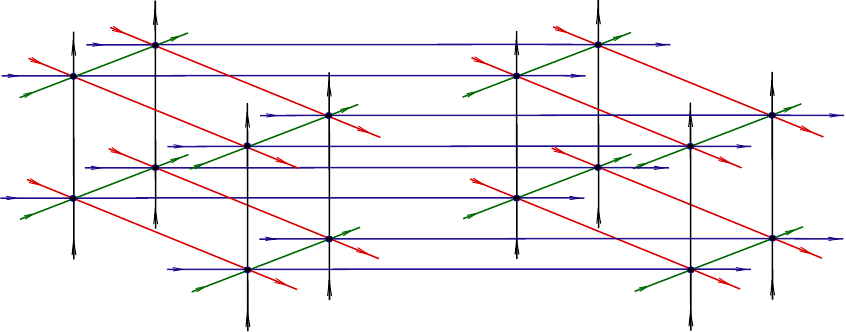}
 \caption{A $2\times 2\times 2\times 2$ block~$\mathbf B$. It is understood that a copy of matrix~$A$ is at each of 16 vertices, marked by small black filled circles}
 \label{f:B}
\end{figure}
In each of the four coordinate directions, numbered below by $j=1,\ldots,4$, there are eight input or output spins. We write them (either input or output spins) together as an 8-row vector, and call the linear space formed by them~$\mathbf V_j$, for each direction~$j$. The whole block~$\mathbf B$ acts in the direct sum 
\begin{equation}\label{V}
\mathbf V = \mathbf V_1 \oplus \mathbf V_2 \oplus \mathbf V_3 \oplus \mathbf V_4 \,,
\end{equation}
accordingly, we write its elements as 32-rows composed of the mentioned four 8-rows taken in the natural order.

\begin{definition}\label{d:thsp}
Spaces~$\mathbf V_j$ of 8-rows containing either input or output spins belonging to edges parallel to any of the coordinate directions $j=1,\ldots,4$, as described above, are called below \emph{thick spaces}.
\end{definition}

The order of entries in each 8-row, or, which is the same, the order of eight edges in Figure~\ref{f:B} going in a fixed direction, is chosen in the following natural way. Assume that there is a 4d coordinate system whose axes are situated in such way that the coordinates of all vertices with matrices~$A$ in Figure~\ref{f:B} belong to the Cartesian product $\{0,1\}\times \{0,1\}\times \{0,1\}\times \{0,1\}$ (that is, any coordinate of any vertex is either~$0$ or~$1$), and the directions of the axes coincide with the directions of the edges in that figure. 

\begin{definition}\label{d:e}
We call \emph{coordinates of an edge} the \emph{three} coordinates of its any point in the mentioned coordinate system, excluding from its four coordinates one belonging to the axis parallel to the edge in question.
\end{definition}

Returning to the question of ordering the eight parallel edges, we use the \emph{lexicographic} ordering of their coordinates.

\smallskip

There might be some confusion in terminology in this paper, because the word ``block'' can have, in our context, two meanings. First, $\mathbf B$ is itself a block of copies of matrix~$A$, in the same sense as in the Kadanoff--Wilson theory. Second, as $\mathbf B$ acts in a direct sum of four linear spaces, $\mathbf B$ may be represented by a \emph{block matrix} consisting of ``blocks'' acting from a given $\mathbf V_i$ into a given $\mathbf V_j$. We reserve here the word ``block'' for its first sense, while in the second case, we rename blocks into ``thick matrix entries'', according to the following definition.

\begin{definition}\label{d:thme}
The submatrix of~$\mathbf B$ consisting of rows corresponding to~$\mathbf V_i$ and columns corresponding to~$\mathbf V_j$  will be denoted~$\mathbf b_{jk}$ and called \emph{thick matrix entry} of~$\mathbf B$.
\end{definition}

It turns out that there are vectors~$\mathsf e_i^{(j)} \in \mathbf V_j$, \emph{four} such linearly independent vectors ($i=1,\ldots,4$) in each~$\mathbf V_j$, such that block~$\mathbf B$ acts on them like matrix~$\tilde A$ obtained from~$A$ by the Frobenius transform
\begin{equation}\label{F}
a_{jk} \mapsto \tilde a_{jk} = a_{jk}^2
\end{equation}
of all the entries:
\begin{equation}\label{bjk}
\mathsf e_i^{(j)} \mathbf b_{jk} = a_{jk}^2 \mathsf e_i^{(k)}.
\end{equation}
In other words, a 32-row $\lambda _1 \mathsf e_i^{(1)} + \lambda _2 \mathsf e_i^{(2)} + \lambda _3 \mathsf e_i^{(3)} + \lambda _4 \mathsf e_i^{(4)}$ is taken by (the \emph{right} action of) $\mathbf B$ into $\mu _1 \mathsf e_i^{(1)} + \mu _2 \mathsf e_i^{(2)} + \mu _3 \mathsf e_i^{(3)} + \mu _4 \mathsf e_i^{(4)}$, where
\begin{equation}\label{lm}
\begin{pmatrix} \mu _1 & \mu _2 & \mu _3 & \mu _4 \end{pmatrix} = \begin{pmatrix} \lambda _1 & \lambda _2 & \lambda _3 & \lambda _4 \end{pmatrix} \tilde A \,.
\end{equation}
The actual form of vectors~$\mathsf e_i^{(j)}$ will be given soon in Proposition~\ref{p:e}. Here we note that, although relations~\eqref{bjk} are obviously conserved if we change these vectors to their linear combinations in the following way:
\begin{equation*}
\begin{pmatrix} \mathsf e_1^{(j)} \\ \mathsf e_2^{(j)} \\ \mathsf e_3^{(j)} \\ \mathsf e_4^{(j)} \end{pmatrix} \mapsto \begin{pmatrix} \mathsf m_1^{(j)} \\ \mathsf m_2^{(j)} \\ \mathsf m_3^{(j)} \\ \mathsf m_4^{(j)} \end{pmatrix} = M \begin{pmatrix} \mathsf e_1^{(j)} \\ \mathsf e_2^{(j)} \\ \mathsf e_3^{(j)} \\ \mathsf e_4^{(j)} \end{pmatrix} ,
\end{equation*}
where $M$ is a nondegenerate $4\times 4$ matrix (the same for all~$j$), the mentioned Proposition~\ref{p:e} clearly shows that there is a preferred specific choice of~$\mathsf e_i^{(j)}$.

\begin{remark}
Index~$i$ in~$\mathsf e_i^{(j)}$ has, apparently, a nature different from~$j$. Namely, $j$ numbers our thick spaces~$\mathbf V_j$, while $i$ numbers four vectors within each of them. This makes even more surprising Proposition~\ref{p:e} below, because it implies, as one can easily see, that changing~$A$ to its transpose~$A^{\mathrm T}$ leads to interchange $i\leftrightarrow j$.
\end{remark}

\begin{definition}\label{d:rp}
We put a \emph{representing polynomial} depending on four indeterminates $u_j$, \ $j=1,\ldots,4$, in correspondence to a vector in each of the spaces~$\mathbf V_j$ according to the following principle: for a vector $\begin{pmatrix} x_1 & \ldots & x_8 \end{pmatrix} \in \mathbf V_j$, take all~$u_i$ \emph{except}~$u_j$, not changing their order, and denote them as $v_1, v_2, v_3$ (for instance, if $j=1$, then $v_1=u_2$, $v_2=u_3$, and~$v_3=u_4$). The polynomial is, by definition,
\begin{equation}\label{rp}
 x_1 v_1 v_2 v_3 + x_2 v_1 v_2 + x_3 v_1 v_3 + x_4 v_1 + x_5 v_2 v_3 + x_6 v_2 + x_7 v_3 + x_8 .
\end{equation}
\end{definition}

Recall that an edge in Figure~\ref{f:B} parallel to $j$-th coordinate axis (with a fixed $j\in \{1,2,3,4\}$) has, according to Definition~\ref{d:e}, three coordinates, each being either~$0$ or~$1$, and our spins~$x_k$---components of vector $\begin{pmatrix}x_1 & \ldots & x_8\end{pmatrix} \in \mathbf V_j$---correspond to these edges taken in the lexicographic order. This means that indeterminates~$u_l$ in~\eqref{rp} correspond thus to the four coordinate axes in the following simple way: $x_k$ is multiplied by~$u_l$ if the $l$-th coordinate of the corresponding edge is~$0$, and not multiplied if~$1$. See these multipliers in Figure~\ref{f:C} for the example of the second thick space~$\mathbf V_2$.
\begin{figure}
 \centering
 \includegraphics[scale=1.5]{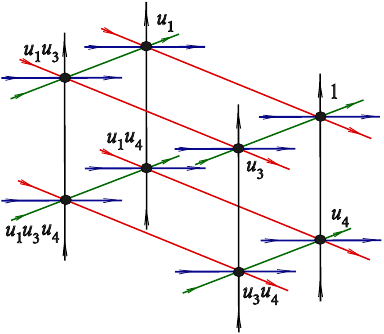}
 \caption{Products of variables~$u_i$ for the example of edges parallel to the 2nd axis (remember that these are colored blue). They are multiplied by spins standing at their corresponding blue edges, according to~\eqref{rp}. Here we show only one 3-face of the 4d cube perpendicular to the 2nd axis; it may be either input or output}
 \label{f:C}
\end{figure}

\begin{proposition}\label{p:e}
Vectors~$\mathsf e_i^{(j)}$ satisfying~\eqref{bjk} can be obtained from the representing polynomials equal to the \emph{cofactors} of the following matrix:
\begin{equation}\label{C}
C = A + \diag(u_1,u_2,u_3,u_4) = \begin{pmatrix} a_{11}+u_1 & a_{12} & a_{13} & a_{14} \\ a_{21} & a_{22}+u_2 & a_{23} & a_{24} \\ a_{31} & a_{32} & a_{33}+u_3 & a_{34} \\ a_{41} & a_{42} & a_{43} & a_{44}+u_4 \end{pmatrix} , 
\end{equation}
that is, to obtain the polynomial representing~$\mathsf e_i^{(j)}$, delete the $i$-th row and the $j$-th column from~$C$ and take the determinant of the resulting submatrix.
\end{proposition}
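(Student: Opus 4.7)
The plan is to verify equation~\eqref{bjk} by direct propagation of the input data through the 16 vertices of $\mathbf B$. I will set up edge-based variables matching the representing-polynomial convention, then use cofactor identities of $C$ to organize the propagation.

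As a first step, for each edge of $\mathbf B$ in direction $j$, assign an edge variable $F^j_e$ equal to the spin on $e$ multiplied by the monomial weight $\prod_{l \ne j,\, c_l = 0} u_l$ from Definition~\ref{d:rp}; since the transverse coordinates of $e$ coincide at both of its endpoints, this definition is independent of which endpoint of $e$ is considered. Under this reformulation, the vertex equation~\eqref{v} at $v = (c_1,\ldots,c_4) \in \{0,1\}^4$ becomes
\[
u_k^{1 - c_k(v)}\, F^k_{v \to v+e_k} \;=\; \sum_{j=1}^{4} a_{jk}\, u_j^{1 - c_j(v)}\, F^j_{v - e_j \to v},
\]
and the input data $\mathsf e_i^{(j)}$ corresponds to $\sum_{v:\, c_j(v)=0} F^j_{v - e_j \to v} = \det(C^{(i,j)})$, with vanishing edge variables on input edges in the three directions different from $j$.

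The central algebraic tool is the identity $C \cdot \operatorname{adj}(C) = \det(C)\, I$. Writing $M_{ik} := \det(C^{(i,k)})$ and $c_{jk} = a_{jk} + u_j \delta_{jk}$, this reduces in characteristic~$2$ to
\[
\sum_{k=1}^{4} a_{jk}\, M_{ik} \;+\; u_j\, M_{ij} \;=\; \delta_{ij}\, \det(C),
\]
together with the analogous relation coming from $\operatorname{adj}(C) \cdot C$. These identities describe exactly how the family $\{M_{ik}\}$ transforms under one application of a vertex map, and so provide the right toolkit to propagate cofactor-shaped polynomials through the block layer by layer. In particular, the factor $a_{jk}^2$ in~\eqref{bjk} emerges as the result of two-layer propagation combined with characteristic-$2$ cancellations of paired ``mixed-direction'' contributions, while the remaining polynomial is forced to be $M_{ik}(u_{\hat k})$ by the cofactor identity.

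The main obstacle is bookkeeping. The block has 16 vertices and 4 vertex equations at each, so the bare expansion of the output in terms of the input is substantial; even armed with the cofactor identity, one has to group intermediate terms carefully in order to recognize the cofactor form at each successive layer. This is exactly why the author's symbolic computer-algebra verification (in GAP) is natural: a purely hand-written proof would reduce to identifying the cancellation pattern systematically enough that the $a_{jk}^2\, M_{ik}$ form remains visible throughout all sixteen propagation steps.
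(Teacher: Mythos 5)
There is a genuine gap: your text is a strategy outline, not a proof. The paper itself proves Proposition~\ref{p:e} only by the direct GAP computation of Appendix~\ref{a:GAP}, and everything in your proposal that goes beyond that computation is asserted rather than derived. Concretely, the cofactor identity $\sum_k a_{jk} M_{ik} + u_j M_{ij} = \delta_{ij}\det C$ is a relation among the \emph{aggregated} representing polynomials $M_{ik}$ (each $M_{ik}$ being the sum of all eight weighted edge variables of a thick space), whereas the vertex relations you wrote act \emph{locally}, on individual monomial coefficients attached to individual edges with fixed transverse coordinates. You never supply the lemma that bridges these two levels -- i.e.\ that the local propagation of the sixteen separate coefficients through the sixteen vertices reassembles, after cancellation, into the single aggregated identity. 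Likewise, the claim that ``the factor $a_{jk}^2$ emerges as the result of two-layer propagation combined with characteristic-$2$ cancellations of paired mixed-direction contributions'' is exactly the content of the proposition, and it is stated, not proved: an input in direction $j$ entering the block produces outputs in all four directions at the first vertex, each of which then traverses up to three further vertices, so the cancellation pattern genuinely involves all sixteen vertices and is not a two-step application of the cofactor identity.

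Your closing paragraph concedes the point: the ``bookkeeping'' you defer is precisely the verification that constitutes the proof, and deferring it to a symbolic computation reproduces the paper's argument without adding to it. To turn your setup into an actual alternative proof you would need, at minimum, an explicit induction over the layers of the block (or over the lattice of vertex coordinates ordered by coordinate sum, as in the GAP function \texttt{f}) with a closed-form description of every intermediate edge variable, verified to collapse to $a_{jk}^2 M_{ik}$ at the output face. As it stands, the proposal identifies plausible machinery but does not establish the statement.
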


\begin{proof}
This somewhat mysterious statement can be proved, at this stage, only by a direct calculation. A GAP program for this calculation in presented in Appendix~\ref{a:GAP}.
\end{proof}

Recall that, according to what we said in the beginning of this section, $\mathsf e_i^{(j)}$ can be considered as an element of linear space~$\mathbf V_j$, as well as of $\mathbf V = \mathbf V_1 \oplus \mathbf V_2 \oplus \mathbf V_3 \oplus \mathbf V_4$. In the first case, we write it as an 8-row, while in the second---as a 32-row, adding zeros at the places corresponding to other spaces.

\begin{proposition}\label{p:d}
Linear subspace $\mathbf W \subset \mathbf V$ spanned by all vectors~$\mathsf e_i^{(j)}$ is a \emph{direct} summand with respect to the action of~$\mathbf B$. That is, there exists another linear space~$\mathbf W'$, also invariant under~$\mathbf B$ and such that $\mathbf W \oplus \mathbf W' = \mathbf V$.
\end{proposition}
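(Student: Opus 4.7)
The plan is to identify $\mathbf W$ as the kernel of an explicit polynomial in $\mathbf B$, and then read off an invariant complement from a Fitting-type decomposition.

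First I would verify that $\mathbf W$ splits, as an $F[\mathbf B]$-module, into four isomorphic pieces
\[
\mathbf W = \mathbf W_1 \oplus \mathbf W_2 \oplus \mathbf W_3 \oplus \mathbf W_4, \qquad \mathbf W_i := \mathrm{span}_F\bigl(\mathsf e_i^{(1)}, \mathsf e_i^{(2)}, \mathsf e_i^{(3)}, \mathsf e_i^{(4)}\bigr).
\]
Each $\mathbf W_i$ is $\mathbf B$-invariant by~\eqref{bjk} and carries the action of $\tilde A$ in its distinguished basis. The sum is direct because within each thick space $\mathbf V_j$ the four vectors $\mathsf e_1^{(j)}, \ldots, \mathsf e_4^{(j)}$ are the cofactors down the $j$-th column of the generic matrix~$C$ of~\eqref{C}, and their leading $u$-monomials---$\prod_{l\neq j}u_l$ for $i=j$, and $\prod_{l\neq i,j}u_l$ for $i\neq j$---are pairwise distinct with nonzero coefficients in~$F$. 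Hence $\dim\mathbf W=16$, and by Cayley--Hamilton applied to each $\mathbf W_i$ one obtains $\mathbf W\subseteq\ker\chi_{\tilde A}(\mathbf B)$.

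Next, set $P:=\chi_{\tilde A}(\mathbf B)\in\mathrm{End}(\mathbf V)$. The key computational step is to verify, by extending the GAP calculation of Appendix~\ref{a:GAP}, the two equalities
\[
\dim\ker P=16 \quad\text{and}\quad \ker P^{2}=\ker P.
\]
The first promotes the inclusion above to $\mathbf W=\ker P$. The second is the standard criterion that forces $\mathbf V=\ker P\oplus\operatorname{image}P$: if $v\in\ker P\cap\operatorname{image}P$, write $v=Pw$; then $P^{2}w=Pv=0$, so $w\in\ker P^{2}=\ker P$, whence $v=Pw=0$. Since $P$ commutes with $\mathbf B$, the image $\mathbf W':=\operatorname{image}P$ is automatically $\mathbf B$-invariant, of complementary dimension~$16$, and the proposition follows.

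The main obstacle, I expect, is the symbolic computation itself: $P$ is a degree-$4$ polynomial in the $32\times 32$ matrix~$\mathbf B$ over the infinite field $F=\mathbb F_2(a_{jk})$, and verifying the rank conditions for~$P$ and~$P^{2}$ can produce sizeable intermediate expressions. A more aesthetic alternative would be to guess an explicit basis of~$\mathbf W'$ in the spirit of Proposition~\ref{p:e}---the $i\leftrightarrow j$ symmetry noted in the remark just before Definition~\ref{d:rp} hints at trying cofactors of a matrix built from $A^{\mathrm T}$---but without such a lucky guess, the Fitting-decomposition argument above still reduces the proposition to two finite rank checks that fall within the reach of the same computer-algebra setup already used for Proposition~\ref{p:e}.
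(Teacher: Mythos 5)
Your route is genuinely different from the paper's. The paper constructs $\mathbf W'$ explicitly: it observes that $\mathbf B^{\mathrm T}$ is the block built from $A^{\mathrm T}$ by the same recipe (with the vertex order reversed along each axis), forms the corresponding sixteen-dimensional $\mathbf B^{\mathrm T}$-invariant space $\mathbf W_{\mathrm T}$ exactly as in Proposition~\ref{p:e}, and sets $\mathbf W'=(\mathbf W_{\mathrm T})^{\perp}$; invariance of $\mathbf W'$ under $\mathbf B$ is then automatic, and only the transversality $\mathbf W\cap\mathbf W'=\{0\}$, $\mathbf W\oplus\mathbf W'=\mathbf V$ is left to a direct machine check. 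So your ``more aesthetic alternative'' (cofactors built from $A^{\mathrm T}$) is essentially what the paper does --- the ingredient your guess is missing is to \emph{dualize}, taking the orthogonal complement of that space rather than the space itself. Your Fitting argument with $P=\chi_{\tilde A}(\mathbf B)$ is logically sound and would also prove the proposition: the inclusion $\mathbf W\subseteq\ker P$ via Cayley--Hamilton on the four copies of $\tilde A$ is correct, your leading-monomial argument for $\dim\mathbf W=16$ works (the degree-$3$ monomial isolates $\mathsf e_j^{(j)}$ and the degree-$2$ monomials $\prod_{l\neq i,j}u_l$ then isolate the others with coefficients $a_{ji}\neq 0$), and the splitting $\mathbf V=\ker P\oplus\operatorname{im}P$ follows as you say. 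But note what this costs: the two conditions $\dim\ker P=16$ and $\ker P^{2}=\ker P$ are \emph{not} guaranteed a priori --- they fail whenever $\chi_{\tilde A}$ is not coprime to the part of the characteristic polynomial of $\mathbf B$ living on the complement, and Section~\ref{s:a} shows that for special $A$ the complement does carry further copies of closely related matrices, so this is not an idle worry; if the checks failed, your method would give no conclusion even though the proposition might still hold. The paper's construction produces an invariant sixteen-dimensional candidate by design and leaves only one transversality check. Both proofs ultimately rest on computation, so yours is acceptable in the same spirit, but it is the more fragile of the two. (Minor point: your $\mathbf W_i=\operatorname{span}(\mathsf e_i^{(1)},\ldots,\mathsf e_i^{(4)})$ clashes notationally with the paper's $\mathbf W_j=\mathbf W\cap\mathbf V_j$ of Remark~\ref{r:W}; your spaces are the invariant ones, the paper's are not.)
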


\begin{proof}
This can be proved based on considering the transposed matrix~$\mathbf B^{\mathrm T}$ for the block~$\mathbf B$. As one can see, $\mathbf B^{\mathrm T}$ is obtained from~$A^{\mathrm T}$ in the same way as $\mathbf B$ from~$A$, except that the \emph{order of vertices along each coordinate axis is reversed}. That is, each coordinate of each vertex must be changed: $0\mapsto 1$ and~$1\mapsto 0$.

We can thus introduce invariant space~$\mathbf W_{\mathrm T}$ for~$\mathbf B^{\mathrm T}$ in the same way as we introduced space~$\mathbf W$ for~$\mathbf B$, and then take $\mathbf W' = (\mathbf W_{\mathrm T})^{\perp}$, that is, the space of rows orthogonal to~$\mathbf W_{\mathrm T}$ in the sense of component-wise product. Then it is checked by a direct calculation that, first, $\mathbf W \cap \mathbf W' = \{0\}$, and second, $\mathbf W \oplus \mathbf W'$ gives the whole~$\mathbf V$.
\end{proof}

\begin{remark}\label{r:W}
Linear space $\mathbf W$ is certainly a direct sum
\begin{equation}\label{W}
\mathbf W = \mathbf W_1 \oplus \mathbf W_2 \oplus \mathbf W_3 \oplus \mathbf W_4
\end{equation}
of its intersections $\mathbf W_j = \mathbf W \cap \mathbf V_j$ with our thick spaces~$\mathbf V_j$. Also, each~$\mathbf W_j$ is of course spanned be vectors~$\mathsf e_i^{(j)}$, \ $i=1,\ldots,4$. We will be using a particular case of spaces~$\mathbf W_j$ below in Proposition~\ref{p:fg}.
\end{remark}

\begin{remark}
Linear space $\mathbf W'$ is also a direct sum of its intersections~$\mathbf W'_j$ with~$\mathbf V_j$. The problem is that the action of~$\mathbf B$ within~$\mathbf W'$ looks much more complicated than within~$\mathbf W$.
\end{remark}

\section[Additional phenomena with just elements of~\texorpdfstring{$\mathbb F_2$}{F2}]{Additional phenomena with just elements of~$\bm{\mathbb F_2}$}\label{s:a}

When matrix entries of~$A$ are just zeros and ones, some new and unexplained (as yet) phenomena may occur. Namely, in addition to the four ``block spin'' direct summands in the block~$B$, there appear two more of them, but with \emph{different}~$A$'s!

Choose, for instance,
\begin{equation}\label{k2}
 A = 
 \begin{pmatrix} 1 & 1 & 1 & 1 \\
                 0 & 1 & 1 & 0 \\
                 0 & 0 & 1 & 1 \\
                 1 & 0 & 0 & 0  
 \end{pmatrix}
\end{equation}

\begin{proposition}\label{p:fg}
For block~$\mathbf B$ made of matrices~\eqref{k2} according to Figure~\ref{f:B}, each space~$\mathbf V_j$ decomposes in the following direct sum:
\begin{equation}\label{WFGH}
\mathbf V_j = \mathbf W_j \oplus \mathbf F_j \oplus \mathbf G_j \oplus \mathbf H_j ,
\end{equation}
in such way that their direct sums $\mathbf W$, $\mathbf F$, $\mathbf G$ and~$\mathbf H$ are invariant under~$\mathbf B$ (here $\mathbf W$ is defined as in~\eqref{W}, and the other three spaces similarly).

Spaces~$\mathbf W_j$ in~\eqref{WFGH} are the same as in Section~\ref{s:g} (see Remark~\ref{r:W}), except that $a_{ij}$ are no longer indeterminates but zeros and ones according to~\eqref{k2}. Block~$\mathbf B$ acts within~$\mathbf W$ also in the same way---decomposes into a direct sum of four matrices~$A$~\eqref{k2}.

Each space~$\mathbf F_j$ is one-dimen\-sional, and spanned by the vector~$\mathsf f^{(j)}$ from the following list:
\begin{equation}\label{f}
\begin{aligned}
\mathsf f^{(1)} &= \begin{pmatrix} 0 & 0 & 1 & 0 & 0 & 0 & 1 & 0 \end{pmatrix} , \\
\mathsf f^{(2)} &= \begin{pmatrix} 0 & 0 & 0 & 0 & 0 & 1 & 0 & 1 \end{pmatrix} , \\
\mathsf f^{(3)} &= \begin{pmatrix} 1 & 0 & 0 & 0 & 1 & 1 & 0 & 0 \end{pmatrix} , \\
\mathsf f^{(4)} &= \begin{pmatrix} 0 & 0 & 0 & 0 & 0 & 1 & 0 & 1 \end{pmatrix} .
\end{aligned}
\end{equation}
Block~$\mathbf B$ acts within~$\mathbf F$ as the following matrix:
\begin{equation}\label{k2-1}
 A_1 = 
 \begin{pmatrix} 1 & 0 & 1 & 1 \\
                 0 & 1 & 0 & 1 \\
                 0 & 1 & 1 & 1 \\
                 1 & 0 & 0 & 0  
 \end{pmatrix} ,
\end{equation}
Namely, we have (compare~\eqref{bjk})
\begin{equation*}
\mathsf f^{(j)} \mathbf b_{jk} = (a_1)_{jk} \mathsf f^{(k)} .
\end{equation*}

Similarly, each space~$\mathbf G_j$ is one-dimen\-sional, and spanned by the vector~$\mathsf g^{(j)}$ from the following list:
\begin{equation}\label{g}
\begin{aligned}
\mathsf g^{(1)} &= \begin{pmatrix} 0 & 1 & 0 & 1 & 0 & 1 & 0 & 1 \end{pmatrix} , \\
\mathsf g^{(2)} &= \begin{pmatrix} 0 & 0 & 0 & 0 & 0 & 0 & 1 & 0 \end{pmatrix} , \\
\mathsf g^{(3)} &= \begin{pmatrix} 0 & 0 & 0 & 0 & 1 & 0 & 1 & 0 \end{pmatrix} , \\
\mathsf g^{(4)} &= \begin{pmatrix} 1 & 0 & 1 & 1 & 1 & 0 & 1 & 1 \end{pmatrix} . 
\end{aligned}
\end{equation}
Block~$\mathbf B$ acts within~$\mathbf G$ as the transpose of~\eqref{k2-1}, that is, matrix
\begin{equation}\label{k2-2}
 A_2 = A_1^{\mathrm T} .
\end{equation}
Namely, we have
\begin{equation*}
\mathsf g^{(j)} \mathbf b_{jk} = (a_2)_{jk} \mathsf g^{(k)} .
\end{equation*}

The action of~$\mathbf B$ in~$\mathbf H$ is described in Appendix~\ref{a:r}.
\end{proposition}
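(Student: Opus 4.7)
The plan is to split the proposition into four verifiable claims corresponding to the four summands, dispatching the easier ones first and isolating the construction of $\mathbf{H}$ as the real work.

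First I would handle $\mathbf{W}$ essentially for free by specialization: Proposition~\ref{p:e} was established over the rational function field, but once we set the $a_{ij}$ to the concrete values in~\eqref{k2}, the identity $\mathsf e_i^{(j)}\mathbf b_{jk}=a_{jk}^2\mathsf e_i^{(k)}$ remains valid in~$\mathbb F_2$, and because Frobenius is the identity on~$\mathbb F_2$ we have $a_{jk}^2=a_{jk}$, which is why $\mathbf B$ acts inside $\mathbf W$ through the original matrix~$A$. I would then compute explicitly the $\dim\mathbf W_j=4$ subspaces using the cofactor formula \eqref{C} specialized at~\eqref{k2}, so that concrete bases of the~$\mathbf W_j$ are on the table for the linear-independence step below.

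Next I would verify the $\mathbf F$- and $\mathbf G$-claims by direct computation. Namely, I would write down, for each ordered pair $(j,k)$, the $8\times 8$ thick matrix entry~$\mathbf b_{jk}$ (which in each case is a specialization of the general block $\mathbf B$ to the matrix~\eqref{k2}; all sixteen such matrices are produced once and stored). Then I would multiply each row vector $\mathsf f^{(j)}$ in~\eqref{f} by~$\mathbf b_{jk}$ and check that the result equals $(a_1)_{jk}\mathsf f^{(k)}$ for $A_1$ as in~\eqref{k2-1}; and analogously for the~$\mathsf g^{(j)}$ against $A_2=A_1^{\mathrm T}$. This is a finite check over~$\mathbb F_2$ and fits exactly into the same GAP routine invoked for Proposition~\ref{p:e} (Appendix~\ref{a:GAP}); I would phrase it as ``verified by direct calculation'' and cite the same program.

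The nontrivial point is the existence of the invariant complement~$\mathbf H$ making \eqref{WFGH} a decomposition into $\mathbf B$-invariant summands. A cheap dimension check gives $\dim\mathbf H_j=8-4-1-1=2$, but it does not supply invariance. Here I would mimic the strategy of Proposition~\ref{p:d}: consider~$\mathbf B^{\mathrm T}$, which is the analogous block built from~$A^{\mathrm T}$ with reversed lexicographic order of vertices; construct the analogues of~$\mathbf W$, $\mathbf F$, $\mathbf G$ for~$\mathbf B^{\mathrm T}$ (the $\mathbf F$- and $\mathbf G$-pieces are obtained from the same formulas~\eqref{f}--\eqref{g} applied to~$A^{\mathrm T}$, with the roles of~$A_1$ and~$A_2$ swapped since $A_2=A_1^{\mathrm T}$); and define $\mathbf H$ as the component-wise orthogonal complement of their direct sum inside~$\mathbf V$. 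Because component-wise orthogonality exchanges left- and right-invariance, $\mathbf H$ is automatically $\mathbf B$-invariant. It then remains to check that $\mathbf H$ intersects $\mathbf W\oplus\mathbf F\oplus\mathbf G$ trivially and has the expected dimension~$8$, which again reduces to a finite rank computation over~$\mathbb F_2$.

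The main obstacle is the last step: one must confirm that the transposed pieces really do have the right pairing properties against $\mathbf W_j$, $\mathbf F_j$ and $\mathbf G_j$ (i.e.\ the paired Gram matrix is nondegenerate). For the $\mathbf W$-part this is already implicit in Proposition~\ref{p:d}; for the new one-dimensional summands $\mathbf F$ and $\mathbf G$ I would simply tabulate the eight $\mathsf f^{(j)}$ and $\mathsf g^{(j)}$ against their transposed counterparts and verify nondegeneracy by hand. Once this is done, the four direct summands fit together to give \eqref{WFGH}, the stated actions~\eqref{k2}, \eqref{k2-1}, and \eqref{k2-2} hold by construction, and the description of $\mathbf B|_{\mathbf H}$ can be deferred to Appendix~\ref{a:r}.
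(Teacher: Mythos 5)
Your proposal is correct in substance, but it is organized quite differently from the paper's proof, which consists of the single sentence ``Again direct calculation'' (with a remark that the phenomena were found with Singular and can be checked by modifying the GAP program of Appendix~\ref{a:GAP}). In other words, the paper verifies every claim --- the relations for the $\mathsf e$'s, $\mathsf f$'s, $\mathsf g$'s and $\mathsf h$'s, and the linear independence giving~\eqref{WFGH} --- by one finite machine computation over~$\mathbb F_2$. You replace parts of that brute force with two genuine ideas. First, your treatment of~$\mathbf W$ by specialization is a real gain: since~\eqref{bjk} is a polynomial identity in the~$a_{ij}$ over~$\mathbb F_2$, it survives the substitution~\eqref{k2}, and $a_{jk}^2=a_{jk}$ on~$\mathbb F_2$ explains conceptually why $\mathbf B$ acts on~$\mathbf W$ by~$A$ itself rather than by the Frobenius twist~$\tilde A$; you correctly note that the only thing not inherited from Proposition~\ref{p:e} is that the specialized cofactor vectors remain linearly independent, which must be checked for~\eqref{k2}. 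Second, your construction of~$\mathbf H$ as the component-wise orthogonal complement of the corresponding invariant subspace of~$\mathbf B^{\mathrm T}$ (mimicking Proposition~\ref{p:d}) is a valid way to get invariance of the complement for free, at the price of a nondegeneracy/rank check. Two small caveats: (i) your assertion that the $\mathbf F$- and $\mathbf G$-pieces for~$\mathbf B^{\mathrm T}$ are given by ``the same formulas applied to~$A^{\mathrm T}$'' is itself an unproved (though checkable) claim, not something that follows formally; and (ii) the orthogonal-complement route produces \emph{some} invariant~$\mathbf H$, but the proposition, via Appendix~\ref{a:r}, pins down an explicit basis~\eqref{h} and the matrix~$\mathrm R$~\eqref{R}, so one still has to verify directly that the listed $\mathsf h$-vectors span your~$\mathbf H$ and satisfy the stated relations --- at which point the simplest complete proof collapses back to exactly the paper's: tabulate the sixteen thick entries~$\mathbf b_{jk}$ and check all the identities and the linear independence of the eight listed vectors in each~$\mathbf V_j$.
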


\begin{proof}
Again direct calculation.
\end{proof}

\begin{remark}
The phenomena described in Proposition~\ref{p:fg} were found with the help of computer algebra package Singular~\cite{Singular}. The interested reader can \emph{check} them, however, using GAP, by slightly modifying the program in Appendix~\ref{a:GAP}.
\end{remark}

\section{Discussion}\label{s:d}

It is of course interesting what will happen if we continue by making blocks of blocks. At our first step, there appeared, from our initial matrix~\eqref{k2}, four copies of itself, one matrix~\eqref{k2-1}, one matrix~\eqref{k2-2}, and something more complicated that we describe in Appendix~\ref{a:r}. The natural question is what appears when we make a $2\times 2\times 2\times 2$ block out of matrices~$A_1$ (then we will see that also for~$A_2$ by means of transposing).

The answer is that we obtain four copies of~$A_1$ (as we could expect) together with one \emph{initial}~$A$ and one its transpose~$A^{\mathrm T}$. 

Such phenomena give us a hope that some interesting combinatorial correlations can be calculated, in the spirit of work~\cite{prodolzhenie}.

The main reason for our interest in this subject is, however, the fascinating algebra, of which, it looks like only a small part has been already revealed.

\appendix

\section[A GAP program to check the properties of vectors \texorpdfstring{$\mathsf e_i^{(j)}$}{ei(j)}]{A GAP program to check the properties of vectors $\bm{\mathsf e_i^{(j)}}$}\label{a:GAP}

A working GAP code can be copied directly from here below:

\begin{verbatim}
char := 2;  # the characteristic of the field
n := 4;     # the dimension of the space

field := GF(char); # Galois field of two elements

# Now we introduce matrix A whose entries A[i][j] are 
                          # denoted a_{ij} in the text
A := NullMat( n, n, field );
 for i in [1..n] do
  for j in [1..n] do
   A[i][j] := X( field, Concatenation( "A[", String(i), 
                              "][", String(j), "]" ) );
  od;
 od;

# Matrix A is 4*4, but when we make a block of 16 copies of A,
# we must extend it to its direct sum with an identity matrix.
# How this is done, depends on the coordinates of the vertex 
# where this A is placed.
# Here we assume that each of the n coordinates takes values 
# in [0..char-1]. That is, either 0 or 1 in our case
big_A := function( A, coords )
 local seq, m, AA;
 seq := Cartesian( List( [1..n-1], x-> [0..char-1] ) );
 m := function( i )
  local without_i;
  without_i := ShallowCopy( coords );
  Remove( without_i, i ); # removing the i-th element from the 
                          # copy of coords
             # we thus obtain the coordinates of 
             # the i-th edge going through the considered vertex
  return (i-1) * char^(n-1) + Position( seq, without_i ); 
     # and here we number all edges in the following way:
     # first go the edges parallel to the first axis, 
     # then to the second, etc., while parallel edges go in the 
     # lexicographic order of their coordinates
 end;
 AA := IdentityMat( n * char^(n-1), field );
 for i in [1..n] do
  for j in [1..n] do
   AA[m(i)][m(j)] := A[i][j];
  od;
 od;
 return AA * One(A[1][1]); # GAP requires that all matrix entries
                                       # belong to the same field
      # this time -- to the field containing our indeterminates
end;

f := function(A) # product of the copies of a matrix A 
                 # over all vertices in the block
                 # starting from the leftmost multiplier 
                 # corresponding to vertex [0, ..., 0]
 local g, c;
 g := function( i ) # this function within function f gives 
                    # all vertices whose sum of coordinates is i
  local all_vertices;
  all_vertices := Cartesian( List( [1..n], x-> [0..char-1] ) );
  return Filtered( all_vertices, x -> Sum(x) = i );
 end;
 c := Concatenation( List( [0 .. n*(char-1)], i -> g(i) ) );
 return Product( c, coords -> big_A( A, coords ) );
end;

F := f(A); # the actual product

# Thick matrix entries of F:
B := function( i, j )
 local r;
 r := char^(n-1);
 return F{[(i-1)*r+1 .. i*r]}{[(j-1)*r+1 .. j*r]};
end;


# Four more indeterminates:
u := List( [1..4], i -> X( field, 
                    Concatenation( "u[", String(i), "]" ) ) );

# 0 and 1 belonging to the required field
zero := Zero(u[1]);
one := One(u[1]);

# Making matrix C:
C := StructuralCopy( A );
C[1][1] := C[1][1] + u[1];
C[2][2] := C[2][2] + u[2];
C[3][3] := C[3][3] + u[3];
C[4][4] := C[4][4] + u[4];

# Making vectors e_i^(j):
e := function( j, i )   # j is the number of the space V_i
 local without_j, v, without_i, H, d, l;
 without_j := Difference( [1..4], [j] );
 v := List( without_j, k -> u[k] );
 without_i := Difference( [1..4], [i] );
 H := C{without_i}{without_j};
 d := Determinant( H );
 l := [ ];
 l[1] := Value( Derivative( Derivative( Derivative( d, 
                v[1] ), v[2] ), v[3] ), v, [zero,zero,zero] );
 l[2] := Value( Derivative( Derivative( d, v[1] ), v[2] ), v,
                                            [zero,zero,zero] );
 l[3] := Value( Derivative( Derivative( d, v[1] ), v[3] ), v, 
                                            [zero,zero,zero] );
 l[4] := Value( Derivative( d, v[1] ), v, [zero,zero,zero] );
 l[5] := Value( Derivative( Derivative( d, v[2] ), v[3] ), v, 
                                            [zero,zero,zero] );
 l[6] := Value( Derivative( d, v[2] ), v, [zero,zero,zero] );
 l[7] := Value( Derivative( d, v[3] ), v, [zero,zero,zero] ); 
 l[8] := Value( d, v, [zero,zero,zero] );
 l := l * one;
 return l;
end;

# It looks convenient to unite the four row vectors e_i^(j), 
# for a fixed j, in a matrix:
mat := function( j )
 return [ e(j,1), e(j,2), e(j,3), e(j,4) ];
end;

# Then, what we must check is written in GAP as follows:
check_Boolean_value := ForAll( [1..4], j -> ForAll( [1..4], k -> 
                mat(j) * B(j,k) = mat(k) * A[j][k]^2 ) );;

Print( check_Boolean_value, "\n" );
\end{verbatim}

\section{The more complicated direct summand}\label{a:r}

According to Proposition~\ref{p:fg}, block~$\mathbf B$ made of matrices~\eqref{k2} decomposes into the direct sum of four initial matrices~$A$, one matrix~$A_1$~\eqref{k2-1}, one matrix~$A_2=A_1^{\mathrm T}$~\eqref{k2-1}, and something in the remaining space~$\mathbf H$---the direct sum of four two-dimen\-sional spaces~$\mathbf H_j$, \ $j=1,\ldots,4$. Here is how~$\mathbf H$ and the action of~$\mathbf B$ within it can be described.

Space~$\mathbf H_j$, for $j=1,\ldots,4$, is spanned by two vectors $\mathsf h_1^{(j)}, \mathsf h_2^{(j)} \in \mathbf V_j$, which are as follows:
\begin{equation}\label{h}
 \begin{aligned}
 \begin{pmatrix} \mathsf h_1^{(1)} \\ \mathsf h_2^{(1)} \end{pmatrix} = 
 \begin{pmatrix}
   0 & 1 & 0 & 1 & 0 & 0 & 0 & 0  \\
       0 & 0 & 0 & 1 & 0 & 0 & 0 & 1 
 \end{pmatrix} , \\
 \begin{pmatrix} \mathsf h_1^{(2)} \\ \mathsf h_2^{(2)} \end{pmatrix} = 
 \begin{pmatrix}
    0 & 1 & 0 & 1 & 0 & 1 & 0 & 1  \\
       0 & 0 & 0 & 0 & 1 & 0 & 1 & 0  
 \end{pmatrix} , \\
 \begin{pmatrix} \mathsf h_1^{(3)} \\ \mathsf h_2^{(3)} \end{pmatrix} = 
 \begin{pmatrix}
    0 & 0 & 0 & 0 & 1 & 0 & 0 & 0  \\
       0 & 1 & 0 & 0 & 0 & 1 & 0 & 0   
 \end{pmatrix} , \\
 \begin{pmatrix} \mathsf h_1^{(4)} \\ \mathsf h_2^{(4)} \end{pmatrix} = 
 \begin{pmatrix}
    1 & 0 & 0 & 1 & 1 & 0 & 0 & 1  \\
       0 & 1 & 0 & 1 & 0 & 1 & 0 & 1   
 \end{pmatrix} .
 \end{aligned}
\end{equation}  

In the basis of vectors~\eqref{h}, the action of~$\mathbf B$ is given by the following matrix:
\begin{equation}\label{R}
 \mathrm R = 
 \left(
 \begin{array}{cc|cc|cc|cc}
   1 & 0 & 1 & 0 & 0 & 0 & 1 & 0  \\
   0 & 1 & 0 & 0 & 0 & 1 & 0 & 1  \\
   \hline
   0 & 0 & 1 & 0 & 0 & 0 & 0 & 1  \\
   0 & 1 & 0 & 1 & 0 & 0 & 0 & 0  \\
   \hline
   1 & 0 & 0 & 0 & 1 & 0 & 0 & 0  \\
   0 & 0 & 0 & 0 & 0 & 1 & 1 & 1  \\
   \hline
   1 & 0 & 0 & 1 & 1 & 0 & 0 & 0  \\
   0 & 1 & 0 & 0 & 1 & 0 & 0 & 0  
 \end{array}
 \right) .
\end{equation} 
The exact sense of this is as follows. We consider $\mathrm R$ as a $4\times 4$ matrix whose entries are $2\times 2$ cells (blocks, into which it is divided in~\eqref{R}). Denote, accordingly, $\mathrm R_{jk}$ the cell staying at the intersection of the $j$-th doubled row and the $k$-th doubled column. For instance, $\mathrm R_{34} = \begin{pmatrix} 0 & 0 \\ 1 & 1 \end{pmatrix}$.

Then, for matrix~\eqref{R} and vectors~\eqref{h}, the following relation---analogue of~\eqref{bjk}---holds:
\begin{equation*}
 \begin{pmatrix} \mathsf h_1^{(j)} \\ \mathsf h_2^{(j)} \end{pmatrix} \mathbf B_{jk} = \mathrm R_{jk} \begin{pmatrix} \mathsf h_1^{(k)} \\ \mathsf h_2^{(k)} \end{pmatrix}
\end{equation*}

\label{lastpage}

\begin{thebibliography}{99}


\bibitem{GAP}
GAP -- Groups, Algorithms, Programming.
A System for Computational Discrete Algebra 
\href{https://www.gap-system.org/}{https://www.gap-system.org/}.

\bibitem{Gitterman-Halpern}
M.~Gitterman and V.~Halpern, Phase transitions. A brief account with modern applications,
World Scientific, 2004.

\bibitem{Hietarinta}
J. Hietarinta, Permutation-type solutions to the Yang - Baxter and other $n$-simplex equations,
\textit{\href{https://doi.org/10.1088/0305-4470/30/13/024}{J. Phys. A: Math. Gen.}} {\bf 30}, 4757--4771, 1997.

\bibitem{Kadanoff}
L.P. Kadanoff, Scaling laws for Ising models near $T_c$,
\textit{\href{https://doi.org/10.1103/physicsphysiquefizika.2.263}{Physics}} {\bf 2}:6, 263--272, 1966.

\bibitem{block-spin}
Igor G. Korepanov, Self-similarity in cubic blocks of $\mathcal R$-operators,
\textit{\href{https://doi.org/10.1063/5.0143884}{J. Math. Phys.}} {\bf 64}, 101704, 2023.
\href{https://arxiv.org/abs/2211.08926}{arXiv:2211.08926}.

\bibitem{prodolzhenie}
Igor G. Korepanov, Combinatorial correlation functions in three-dimensional eight-vertex models,
\href{https://arxiv.org/abs/2402.12012}{arXiv:2402.12012}.

\bibitem{Singular}
Singular.
A Computer Algebra System for Polynomial Computations,
\href{https://www.singular.uni-kl.de/}{https://www.singular.uni-kl.de/}.

\bibitem{Wilson}
K.G. Wilson, The renormalization group: critical phenomena and the Kondo problem,
\textit{\href{https://doi.org/10.1103/revmodphys.47.773}{Rev. Mod. Phys.}} {\bf 47}:4, 773--840, 1975.

\end{thebibliography}
\end{document}